\titleformat{\section}[hang]{\bfseries\Large\filcenter}{\arabic{section}.}{0.5em}{}
\declaretheorem[numberwithin=section]{corollary}
\declaretheorem[numberlike=corollary]{lemma}
\declaretheorem[numbered=no,style=remark,qed=$\blacksquare$]{proof}
\DeclareMathOperator{\ob}{ob}
\DeclareMathOperator{\id}{id}
\def\bN{\ensuremath{\mathbb N}\xspace}
\def\sC{\ensuremath{\mathscr C}\xspace}
\newcommand{\prodcone}[5]{\ensuremath{{#1}\overset{#2}\leftarrow{#3}\overset{#4}\to{#5}}}
\newcommand{\pair}[1]{\ensuremath{\left<{#1}\right>}}
\tikzset{node distance=0.5cm, auto, minimum size=0cm,
  baseline=(current bounding box.center),
  la/.style={scale=0.8},
  a/.style={->},
  d/.style={double, double equal sign distance,-},
  u/.style={dashed,->},
}
\title{A common misinterpretation of Isbell's obstruction to monoidal strictification}
\author{tslil clingman}
\date{}
\begin{document}

\maketitle

\abstract{A monoidal category has a natural isomorphism $\alpha_{A,B,C}\colon(A\otimes B)\otimes C\to A\times (B\otimes C)$ called the associator. In the case where the objects $(A\otimes B)\otimes C$ and $A\otimes(B\otimes C)$ are equal, it is natural to ask whether this map may be taken to be the identity. Isbell \cite{Isbell} gave an argument for an obstruction to strictifying the component of the associator of a cartesian monoidal category at an object $C=C\times C$. This argument has been widely reproduced and is commonly misunderstood as demonstrating that naturality of the associator is the true obstruction to strictification. We consider the hidden hypothesis in this argument, give a new argument not dependent on naturality but on the hidden hypothesis, and finally show that naturality alone is not the issue  -- rather the crux of Isbell's argument involves a hidden assumption concerning the product cones. Through this analysis we also resolve that there can be no general obstruction to strictifying a component of the of the associator, even at such an object $C$, in a cartesian monoidal category.}

\section{Introduction}

In \cite{Isbell}, Isbell wrote
\begin{quote}
  ``Any category having finite products has a coherently associative product functor [6]. But a skeletal category having infinite products cannot have a strictly associative binary product functor $\times$ unless it is schlicht\footnote{posetal}. Suppose \sC is not schlicht, but $\times$ is associative. For some objects $A,B$ there are two morphisms $A\to B$. Then an infinite power $C$ of $B$ satisfies $C\times C=C$ and has (accordingly) two endomorphic coordinate projections $p_{1}$,$p_{2}$. For endomorphisms of $C$, $f\times g$ has coordinates $p_{1}(f\times g)=fp_{1}$, $p_{2}(f\times g)=gp_{2}$. Then $p_{1}(f\times(g\times h))=fp_{1}=p_{1}((f\times g)\times h)=(f\times g)p_{1}$. Since $p_{1}$ is epic, $f=f\times g$ which is absurd.''
\end{quote}

Variations on this argument have been reproduced, for example, in \cite[\S VII.I]{MacLane}, \cite[Example 3.6.7]{Riehl}, and \cite[Theorem 10.1]{Braunling}. Where this argument appears it is typically used to assert that it is the naturality of the associator with a strictified component which is problematic. As we shall see however, this is not the case: we can always arrange for cartesian monoidal structures which validate all of the \emph{explicit} hypotheses of this argument, but not its conclusion. Nevertheless, under an additional assumption and by means of a different proof we may reproduce the claimed obstruction.

As suggested by our emphasis of the word ``explicit'', the \emph{raison d'\^etre} of the present paper is the presence of a hidden subtlety to this argument that we now expose, viz., the unaddressed choice of cones. If \sC is a category with all binary products and a terminal object, then in constructing a cartesian monoidal structure on \sC we must choose for each pair of objects $A,B\in\ob\sC$ not only a product object $A\times B$ but also a product cone
$\prodcone{A}{\pi_{A}}{A\times B}{\pi_{B}}{B}$. Unlike in a general monoidal category where the associator is data, in a cartesian monoidal category the associator $\alpha_{A,B,C}\colon (A\times B)\times C\to A\times (B\times C)$ is uniquely determined by a choice of, \emph{a priori}, four product cones.\footnote{As we shall see in \cref{lemma:paste_sinister,lemma:paste_dexter}, given a binary product $A\times B$ and cone, triple product cones for $A,B,C$ are in bijection with binary products cones for $(A\times B),C$. Indeed then, there are precisely four choices to be made.} That is, having chosen the below product cones, the associator $\alpha_{A,B,C}$ is the unique morphism satisfying the equations there displayed.
\begin{equation}\label{eqn:generic_alpha}
  \begin{gathered}
  \begin{tikzpicture}
    \node(l1)[]{$(A\times B)\times C$};
    \node(lb1)[below= of l1,xshift=-1cm]{$A\times B$};
    \node(lb2)[below= of lb1,xshift=-0.5cm]{$A$};
    \node(lb2p)[below= of lb1,xshift=0.5cm]{$B$};
    \node(lb3)[below= of l1,xshift=1cm]{$C$};
    \draw[a](l1)to node[la,swap,pos=0.8]{$\pi^{\ell}_{1}$}(lb1);
    \draw[a](l1) to node[la,pos=0.8]{$\pi^{\ell}_{2}$}(lb3);
    \draw[a](lb1)to node[la,swap,pos=0.9]{$\pi^{A\times B}_{1}$}(lb2);
    \draw[a](lb1) to node[la,pos=0.9]{$\pi^{A\times B}_{2}$}(lb2p);
    \coordinate (tl) at ($(lb3)+(4cm,0)$);
    \node(r1) at (tl|-l1){$A\times(B\times C)$};
    \node(rb1)[below= of r1,xshift=-1cm]{$A$};
    \node(rb3)[below= of r1,xshift=1cm]{$B\times C$};
    \node(rb2)[below= of rb3,xshift=-0.5cm]{$B$};
    \node(rb2p)[below= of rb3,xshift=0.5cm]{$C$};
    \draw[a](r1)to node[la,swap,pos=0.8]{$\pi^{r}_{1}$}(rb1);
    \draw[a](r1) to node[la,pos=0.8]{$\pi^{r}_{2}$}(rb3);
    \draw[a](rb3)to node[la,swap,pos=0.9]{$\pi^{B\times C}_{1}$}(rb2);
    \draw[a](rb3) to node[la,pos=0.9]{$\pi^{B\times C}_{2}$}(rb2p);
  \end{tikzpicture}\\\\
    \pi^{r}_{1}\alpha_{A,B,C}=\pi^{A\times B}_{1}\pi^{\ell}_{1}\ ,\quad
    \pi^{B\times C}_{1}\pi^{r}_{2}\alpha_{A,B,C}=\pi^{A\times B}_{2}\pi^{\ell}_{1}\ ,\quad
    \pi^{B\times C}_{2}\pi^{r}_{2}\alpha_{A,B,C}=\pi^{\ell}_{2}
  \end{gathered}
\end{equation}

With this in mind, in \cref{sec:explicit_isbell} we elaborate Isbell's argument and show its sensitivity the suppressed selection of product cones. In \cref{sec:samecones} we show that this hidden hypothesis is enough to deduce an obstruction without relying on the naturality of the associator. Thus in \cref{sec:differentcones} we are emboldened to demonstrate that one may always choose cones carefully so as to realise the associator component $\alpha_{A,B,C}$ as strict at a single but arbitrary triple of objects $A,B,C$ -- even if all three objects are the same $A=B=C$ and when $C$ is a product $C\times C$. We will then conclude in \cref{sec:conclusion} that Isbell's argument is therefore dependent upon the concealed choice of cones and that the obstruction to strictification does not lie in the naturality of the associator -- thereby refuting the common {misinterpretation} of Isbell's argument.

\section{Isbell's hidden assumption}\label{sec:explicit_isbell}

Let us begin by carefully examining Isbell's argument. The core of the proof in \cite{Isbell} makes explicit the assumption of an object $C$ which is a product $C=C\times C$ with a product cone ${p_{1}},{p_{2}}\colon C\to C$, as well as the presence of two endomorphisms $f,g\colon C\to C$.

Implicit in this proof is a choice of triple product cones for the object $C$. Let us be therefore unbiased in our choice and write generically the following for the triple product cones. Note that we have already fixed the product cone $C\leftarrow C\to C$ so that the ``bottom layer'' of the cones below is determined, but we are as yet unconstrained in the choice of cones above.
\begin{equation}\label{eqn:unbiased_isbell}
  \begin{tikzpicture}[node distance=0.3cm]
    \node(l1)[]{$C$};
    \node(lb1)[below= of l1,xshift=-1cm]{$C$};
    \node(lb2)[below= of lb1,xshift=-0.5cm]{$C$};
    \node(lb2p)[below= of lb1,xshift=0.5cm]{$C$};
    \node(lb3)[below= of l1,xshift=1cm]{$C$};
    \node[below=1.75cm of l1]{``$(C\times C)\times C$''};
    \draw[a](l1)to node[la,swap]{$p^{\ell}_{1}$}(lb1);
    \draw[a](lb1)to node[la,swap]{$p_{1}$}(lb2);
    \draw[a](lb1) to node[la]{$p_{2}$}(lb2p);
    \draw[a](l1) to node[la]{$p^{\ell}_{2}$}(lb3);
  \end{tikzpicture}\hspace{3cm}
  \begin{tikzpicture}[node distance=0.3cm]
    \node(r1)[]{$C$};
    \node(rb1)[below= of l1,xshift=-1cm]{$C$};
    \node(rb3)[below= of l1,xshift=1cm]{$C$};
    \node(rb2)[below= of rb3,xshift=-0.5cm]{$C$};
    \node(rb2p)[below= of rb3,xshift=0.5cm]{$C$};
    \node[below=1.75cm of l1]{``$C\times (C\times C)$''};
    \draw[a](r1)to node[la,swap]{$p^{r}_{1}$}(rb1);
    \draw[a](rb3)to node[la,swap]{$p_{1}$}(rb2);
    \draw[a](rb3) to node[la]{$p_{2}$}(rb2p);
    \draw[a](r1) to node[la]{$p^{r}_{2}$}(rb3);
  \end{tikzpicture}
\end{equation}

From these cones we construct the associator component $\alpha_{C,C,C}\colon C\to C$ as in \eqref{eqn:generic_alpha}, and Isbell then introduces the final hypothesis that this  associator component is the identity $\id_{C}$.

With the hypotheses fixed, we may turn our attention to constructing the various morphisms used in the argument. First we construct $f\times g\colon C\to C$ as the pairing $\pair{fp_{1},gp_{2}}_{p_{1},p_{2}}$ with respect to the cone $p_{1},p_{2}\colon C\to C$. Next the morphisms $(f\times g)\times h$ and $f\times (g\times h)$ are constructed as follows.
\begin{equation}\label{eqn:pairing_formulas}
  \begin{aligned}
    (f\times g)\times h&\coloneqq\pair{\pair{fp_{1},gp_{2}}_{p_{1},p_{2}}p_{1}^{\ell},\ hp_{2}^{\ell}}_{p_{1}^{\ell},p_{2}^{\ell}}\\
    f\times(g\times h)&\coloneqq\pair{fp_{1}^{r},\ \pair{gp_{1},hp_{2}}_{p_{1},p_{2}}p_{2}^{r}}_{p_{1}^{r},p_{2}^{r}}
  \end{aligned}
\end{equation}
Now let us examine Isbell's string of equalities, using $p_{1}^{\ell},p_{2}^{\ell}$ and $p_{1}^{r},p_{2}^{r}$ where necessary. First Isbell gives $p^{r}_{1}(f\times(g\times h))=fp_{1}^{r}$ which holds by definition from \eqref{eqn:pairing_formulas}. Next he uses the \emph{naturality} of the associator $\alpha_{C,C,C}$ as well as the hypothesis that  $\alpha_{C,C,C}=\id_{C}$ to deduce that  $fp_{1}^{r}=p_{1}^{r}((f\times g)\times h)$. His next step, however, is not generally valid.

Isbell writes $p^{r}_{1}((f\times g)\times h)=(f\times g)p^{r}_{1}$, but there is no reason that this should hold. Indeed, looking at \eqref{eqn:pairing_formulas} we see that $p^{r}_{1}((f\times g)\times h)$ is \emph{not} a quantity we have any ability to compute at all -- $(f\times g)\times h$ was defined in terms of the cone $(p^{\ell}_{1},p^{\ell}_{2})$! Here thus is Isbell's hidden hypothesis: \emph{if} we additionally choose the cones in \eqref{eqn:unbiased_isbell} so that $p^{\ell}_{1}=p^{r}_{1}=p_{1}$ and $p^{\ell}_{2}=p^{r}_{2}=p_{2}$ \emph{then} the argument goes through as stated and we may conclude that $f=f\times g$. In fact, \cref{cor:consistent} below shows that this assumption is necessary.

In the next two sections we will draw attention to the two features of the above proof we have emphasised: the choice of cones and the naturality of the associator.

\section{Strict associativity can be an obstruction}\label{sec:samecones}

In what follows we will aim to show that the naturality of the associator $\alpha$ is inessential to Isbell's conclusion, and instead the obstruction to strictification may be derived only from $\alpha_{C,C,C}=\id_{C}$ and the hidden hypothesis on cones.

Let us, as Isbell does, suppose the existence of an object $C$ with a product cone
$\prodcone C{p_{1}} C {p_{2}} C$ so that $C$ is a product of $C$ with $C$. To leverage this assumption we will choose the object $C$ for the products $(C\times C)\times C$ and $C\times(C\times C)$, but crucially also choose the following cones for these triple products.
\begin{equation}\label{eqn:samecones}
  \begin{tikzpicture}[node distance=0.3cm]
    \node(l1)[]{$C$};
    \node(lb1)[below= of l1,xshift=-1cm]{$C$};
    \node(lb2)[below= of lb1,xshift=-0.5cm]{$C$};
    \node(lb2p)[below= of lb1,xshift=0.5cm]{$C$};
    \node(lb3)[below= of l1,xshift=1cm]{$C$};
    \node[below=1.75cm of l1]{``$(C\times C)\times C$''};
    \draw[a](l1)to node[la,swap]{$p_{1}$}(lb1);
    \draw[a](lb1)to node[la,swap]{$p_{1}$}(lb2);
    \draw[a](lb1) to node[la]{$p_{2}$}(lb2p);
    \draw[a](l1) to node[la]{$p_{2}$}(lb3);
  \end{tikzpicture}\hspace{3cm}
  \begin{tikzpicture}[node distance=0.3cm]
    \node(r1)[]{$C$};
    \node(rb1)[below= of l1,xshift=-1cm]{$C$};
    \node(rb3)[below= of l1,xshift=1cm]{$C$};
    \node(rb2)[below= of rb3,xshift=-0.5cm]{$C$};
    \node(rb2p)[below= of rb3,xshift=0.5cm]{$C$};
    \node[below=1.75cm of l1]{``$C\times (C\times C)$''};
    \draw[a](r1)to node[la,swap]{$p_{1}$}(rb1);
    \draw[a](rb3)to node[la,swap]{$p_{1}$}(rb2);
    \draw[a](rb3) to node[la]{$p_{2}$}(rb2p);
    \draw[a](r1) to node[la]{$p_{2}$}(rb3);
  \end{tikzpicture}
\end{equation}
We wish to emphasise that this is merely a possible choice, it just so happens that we may reuse the cone $(p_{1},p_{2})$ when choosing all four cones for both triple products. There are, in general, many choices one might make for the triple product cones, each of which leads to a cartesian monoidal structure. However, having first made the same choice for every cone as in \eqref{eqn:samecones} above, there \emph{are} obstructions to the strictness of the associator at $C$. One such is given by the following lemma.

\begin{lemma}\label{lemma:subterm}
  Let \sC be a category, and let $C\in\ob\sC$ be an object equipped with a product cone $\prodcone C{p_{1}} C{p_{2}} C$. The unique morphism $\alpha_{C,C,C}\colon C\to C$ satisfying
  \begin{equation}\label{eqn:alpha}
    p_{1}\alpha_{C,C,C}=p_{1}p_{1},\quad\quad p_{1}p_{2}\alpha_{C,C,C}=p_{2}p_{1},\quad\text{and}\quad p_{2}\alpha_{C,C,C}=p_{2}p_{2}
  \end{equation}
  is the identity if and only if $\sC(-,C)$ is sub-terminal in $\widehat\sC={\normalfont\textsf{Cat}}(\sC^{\operatorname{op}},{\normalfont{\textsf{Set}}})$.
\end{lemma}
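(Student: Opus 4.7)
The plan is to prove the two directions of the biconditional separately. The reverse direction is immediate: if $\sC(-,C)$ is sub-terminal, then $\sC(C,C)$ contains at most one element, so $\alpha_{C,C,C}$ and $\id_{C}$, both being morphisms $C\to C$, must coincide.

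For the forward direction, by uniqueness of $\alpha_{C,C,C}$ the hypothesis $\alpha_{C,C,C}=\id_{C}$ is equivalent to $\id_{C}$ satisfying the defining equations \eqref{eqn:alpha}, namely $p_{1}p_{1}=p_{1}$, $p_{1}p_{2}=p_{2}p_{1}$, and $p_{2}p_{2}=p_{2}$ as endomorphisms of $C$. To establish sub-terminality I need only show that any two parallel morphisms $f,g\colon X\to C$ agree. The crucial move is to form the pairing $h=\pair{f,g}_{p_{1},p_{2}}\colon X\to C$ furnished by the product cone, so that $p_{1}h=f$ and $p_{2}h=g$. Precomposing the three equations with $h$ then yields, respectively, $p_{1}f=f$, $p_{1}g=p_{2}f$, and $p_{2}g=g$. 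Applying the first and third observations with the roles of $f$ and $g$ interchanged gives additionally $p_{1}g=g$ and $p_{2}f=f$, and so the chain $g=p_{1}g=p_{2}f=f$ closes the argument.

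I expect the main obstacle to be recognising the correct test morphism with which to probe the three hypotheses; once one spots that the pairing $\pair{f,g}$ against the product cone $(p_{1},p_{2})$ simultaneously exercises all three equations, the remainder of the proof is little more than substitution.
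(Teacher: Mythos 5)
Your proof is correct, and it reaches the same destination by a slightly different decomposition. Both arguments agree on the easy direction and both, in the forward direction, probe the three identities $p_{1}p_{1}=p_{1}$, $p_{1}p_{2}=p_{2}p_{1}$, $p_{2}p_{2}=p_{2}$ with morphisms supplied by the universal property of the cone $(p_{1},p_{2})$. The paper, however, first proves the stronger intermediate claim that $p_{1}=p_{2}=\id_{C}$: it pairs $\id_{C}$ with $p_{2}$ to obtain a section $s_{1}$ of $p_{1}$, whence $p_{1}=p_{1}p_{1}s_{1}=p_{1}s_{1}=\id_{C}$, and similarly for $p_{2}$; sub-terminality is then immediate because the pairing $\pair{f,g}$ is forced to equal both $f$ and $g$. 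You instead go directly for $f=g$ by precomposing the equations with $\pair{f,g}$ and $\pair{g,f}$. One concrete difference: the paper's route uses only the first and third equations of \eqref{eqn:alpha} (idempotency of the two projections) and never touches the middle one, whereas your chain $g=p_{1}g=p_{2}f=f$ genuinely relies on $p_{1}p_{2}=p_{2}p_{1}$. (Your own first observation $p_{1}f=f$, specialised to $f=\id_{C}$, already recovers the paper's $p_{1}=\id_{C}$, so the two proofs are close cousins.) Both are equally elementary; the paper's version isolates the structurally cleaner fact that the projections are forced to be identities, which is what its concluding section later appeals to.
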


\begin{proof}
  One direction is clear: if $\sC(-,C)$ is sub-terminal then indeed $\alpha_{C,C,C}=\id_{C}$ is forced. Let us suppose now that $\alpha_{C,C,C}$ is the identity. By using the universal property of the product $C$ of $C$ with itself we may derive a section $s_{1}$ of $p_{1}$ as displayed below-left. But then $p_{1}=p_{1}p_{1}s_{1}=p_{1}s_{1}=\id_{C}$. Similarly we may deduce that $p_{2}=\id_{C}$ and so by the universal property displayed below-right, we see that any two morphisms $f,g\colon A\to C$ must be equal, whence $\sC(-,C)$ is sub-terminal.

  \hfill\begin{tikzpicture}[node distance=0.75cm,baseline=(current bounding box.south)]
    \node(1)[]{$C$};
    \node(2)[right= of 1]{$C$};
    \node(3)[right= of 2]{$C$};
    \node(4)[below= of 2]{$C$};
    \draw[a](2)to node[la,above]{$p_{1}$}(1);
    \draw[a](2)to node[la,above]{$p_{2}$}(3);
    \draw[a](4)to node[la,swap]{$p_{2}$}(3);
    \draw[d](4)to(1);
    \draw[u](4)to node[la]{$s_{1}$}(2);
  \end{tikzpicture}\hspace{2cm}
  \begin{tikzpicture}[node distance=0.75cm,baseline=(current bounding box.south)]
    \node(1)[]{$C$};
    \node(2)[right= of 1]{$C$};
    \node(3)[right= of 2]{$C$};
    \node(4)[below= of 2]{$A$};
    \draw[d](2)to(1);
    \draw[d](2)to(3);
    \draw[a](4)to node[la,swap]{$g$}(3);
    \draw[a](4)to node[la]{$f$}(1);
    \draw[u](4)to(2);
  \end{tikzpicture}\hfill
\end{proof}

Although we have not mentioned naturality of $\alpha_{C,C,C}$ with respect to the object $C$ at all, this lemma is enough for us to recover the conclusion of Isbell's argument under the additional hypothesis on cones.

\begin{corollary}[Isbell's obstruction]
  Let \sC be a category with binary products, and infinite powers of objects. Assume further that for every object $C$ which is a product $C\times C$ the associator was induced canonically by the choice of cones as in \eqref{eqn:samecones}. Then the component $\alpha_{C,C,C}\colon C\to C$ of the associator satisfies \eqref{eqn:alpha}, and is the identity if and only if \sC is posetal.\hfill$\blacksquare$
\end{corollary}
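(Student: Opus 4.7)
The strategy is to combine \cref{lemma:subterm} with the infinite powers hypothesis. The first assertion, that $\alpha_{C,C,C}$ satisfies \eqref{eqn:alpha}, follows by direct substitution of $p^{\ell}_{i} = p^{r}_{i} = p_{i}$ into the characterising equations \eqref{eqn:generic_alpha} and observing that each of the three conditions there collapses onto the corresponding one in \eqref{eqn:alpha}. For the ``if'' direction of the biconditional, suppose $\sC$ is posetal; then every hom-set has at most one element, so $\sC(-, C)$ is sub-terminal for every object $C$, and \cref{lemma:subterm} immediately yields $\alpha_{C,C,C} = \id_{C}$.

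For the substantive converse, suppose $\alpha_{C,C,C} = \id_{C}$ at every $C$ satisfying $C = C \times C$ under the cones of \eqref{eqn:samecones}. Fix arbitrary objects $A, B \in \ob \sC$; it suffices to show that $\sC(A, B)$ has at most one element. Using the infinite powers hypothesis, form $D = B^{\omega}$. Since $\omega$ is in bijection with $\omega \sqcup \omega$, we may arrange that $D \times D = D$ with precisely the cone prescribed by \eqref{eqn:samecones}, so the standing hypothesis together with \cref{lemma:subterm} forces $\sC(-, D)$ to be sub-terminal in $\widehat\sC$. The diagonal-like map $\Delta \colon B \to B^{\omega} = D$ pairing countably many copies of $\id_{B}$ is split by each coordinate projection and is therefore a split monomorphism, so postcomposition $\Delta \circ (-)$ embeds $\sC(A, B)$ into $\sC(A, D)$. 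Since the codomain has at most one element, so does $\sC(A, B)$, and $\sC$ is posetal.

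The only bookkeeping point, hardly an obstacle, lies in arranging the infinite power $B^{\omega}$ to be literally the product of itself with itself in the strict sense that the hypothesis requires; this is accomplished using a chosen bijection $\omega \cong \omega \sqcup \omega$, exactly as in Isbell's original setup. Modulo this, the argument proceeds in the spirit of Isbell's but without ever invoking naturality of the associator -- the obstruction is drawn purely from the sub-terminality conclusion of \cref{lemma:subterm}, thereby realising our promise that the cone hypothesis alone is sufficient.
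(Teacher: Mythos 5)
Your proof is correct and follows essentially the route the paper intends: the corollary is left unproved there precisely because it is the immediate combination of \cref{lemma:subterm} with Isbell's infinite-power trick, namely that $D=B^{\omega}$ carries a product cone $\prodcone{D}{p_{1}}{D}{p_{2}}{D}$ obtained from a bijection $\omega\cong\omega\sqcup\omega$, whereupon sub-terminality of $\sC(-,D)$ transports along a (split monic) diagonal $B\to D$ to show $\sC(A,B)$ has at most one element. Your use of the split diagonal in place of Isbell's appeal to $p_{1}$ being epic is an equivalent and equally clean way to finish.
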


We may also re-derive, for example, the conclusion asserted in \cite[VII.1]{MacLane}. In the category {\normalfont\textsf{Set}} of sets and functions, \bN is a product of \bN with \bN. Then,

\begin{corollary}[Obstruction in \textsf{Set}]
  In {\normalfont\textsf{Set}}, the associator $\alpha_{\bN,\bN,\bN}\colon\bN\to\bN$ defined via \eqref{eqn:alpha} is the identity if and only if $0=1$.\hfill$\blacksquare$
\end{corollary}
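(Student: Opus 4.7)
The plan is to specialise \cref{lemma:subterm} to \sC = {\normalfont\textsf{Set}} and $C = \bN$. First I would verify the hypothesis: in {\normalfont\textsf{Set}}, any bijection $\bN\times\bN\to\bN$ (for instance the Cantor pairing) furnishes a product cone $\prodcone\bN{p_{1}}\bN{p_{2}}\bN$ exhibiting $\bN$ as a product of $\bN$ with itself. With this cone in place, the equations in \eqref{eqn:alpha} uniquely determine a morphism $\alpha_{\bN,\bN,\bN}\colon\bN\to\bN$, and \cref{lemma:subterm} asserts that this morphism is $\id_{\bN}$ precisely when the representable $\textsf{Set}(-,\bN)$ is sub-terminal in $\widehat{\textsf{Set}}$.

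Next I would unwind what sub-terminality of $\textsf{Set}(-,\bN)$ means concretely. It says that for every set $A$, the hom-set $\textsf{Set}(A,\bN)$ has at most one element. Specialising to $A = 1$ identifies functions $1\to\bN$ with elements of $\bN$, so the constant maps at $0$ and at $1$ must coincide, forcing $0 = 1$. Conversely, if we accept $0 = 1$ then induction collapses $\bN$ to a singleton, and the unique endomap of a singleton is perforce the identity, so in particular $\alpha_{\bN,\bN,\bN} = \id_{\bN}$.

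There is no real obstacle here: the genuine content has already been carried out in \cref{lemma:subterm}, and the corollary is a direct translation of its statement into the concrete representable $\textsf{Set}(-,\bN)$. The only care needed is to confirm that the ambient hypothesis of \cref{lemma:subterm} -- the existence of a product cone $\prodcone\bN{p_{1}}\bN{p_{2}}\bN$ -- is met in {\normalfont\textsf{Set}}, which it is via any bijection $\bN\times\bN\cong\bN$.
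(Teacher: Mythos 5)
Your proposal is correct and matches the paper's intended argument: the corollary is stated with an immediate $\blacksquare$ as a direct specialisation of \cref{lemma:subterm} to $C=\bN$ in $\textsf{Set}$, where sub-terminality of $\textsf{Set}(-,\bN)$ fails precisely because $\bN$ has distinct elements $0$ and $1$. Your verification of the hypothesis via a pairing bijection and the unwinding of sub-terminality at $A=1$ supply exactly the routine details the paper leaves implicit.
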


We have phrased these obstructions in this manner in the hopes of convincing the reader that they are, in fact, unremarkable. Far from prohibiting any form of strictification at an object $C=C\times C$, these obstructions now merely stand to inform a more careful choice of cones for triple products of such objects. Indeed, even in Isbell's original argument we must crucially make use of the fact that we have chosen cones as in \eqref{eqn:samecones}, as we will now show.

\section{Strict associativity is consistent}\label{sec:differentcones}

Now that we have proven that a particular choice of cones for the triple product of objects $C=C\times C$ presents an obstruction to strictification of the associator, let us divert our attention from this particular case and give a general construction which informs a more compatible choice of cones.

While the notions of pasting and cancellation of pullback squares are well-documented in the literature, we record here the perhaps less publicised analogue for products.

\begin{lemma}\label{lemma:paste_sinister}
  Given objects $A,B,C\in\ob\sC$ and a product cone $\prodcone A {p_{1}} {A\times B}{p_{2}} B$, composition with this cone gives a bijection between the following sets.
  \[
    \left\{
      \begin{gathered}
        \textrm{\,product cones\,}\\
        \begin{tikzpicture}
          \node(1)[]{$P$};
          \node(2)[below left= of 1,anchor=north]{$A\times B$};
          \node(4)[below right= of 1,anchor=north]{$C$};
          \draw[a](1)to node[la,swap]{$p_{1}^{\ell}$}(2);
          \draw[a](1)to node[la]{$p_{2}^{\ell}$}(4);
        \end{tikzpicture}
      \end{gathered}
    \right\}\cong\left\{
      \begin{gathered}
        \textrm{\,triple product cones\,}\\
        \begin{tikzpicture}
          \node(1)[]{$P$};
          \node(2)[below left= of 1]{$A$};
          \node(3)[below= of 1]{$B$};
          \node(4)[below right= of 1]{$C$};
          \draw[a](1)to node[la,swap]{$p$}(2);
          \draw[a](1)to node[la,swap]{$q$}(3);
          \draw[a](1)to node[la]{$r$}(4);
        \end{tikzpicture}
      \end{gathered}
    \right\}
  \]
\end{lemma}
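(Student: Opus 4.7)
The plan is to construct the bijection explicitly in both directions by exploiting the universal property of the fixed binary product cone $\prodcone{A}{p_{1}}{A\times B}{p_{2}}{B}$, and then observe that the two constructions are mutually inverse essentially by definition.

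For the forward direction, given a binary product cone $(p_{1}^{\ell},p_{2}^{\ell})\colon P\to A\times B, C$, I would set $p \coloneqq p_{1}p_{1}^{\ell}$, $q \coloneqq p_{2}p_{1}^{\ell}$, and $r \coloneqq p_{2}^{\ell}$. For the backward direction, given a triple product cone $(p,q,r)$, I would invoke the universal property of the fixed binary product to produce the unique morphism $p_{1}^{\ell}\colon P\to A\times B$ satisfying $p_{1}p_{1}^{\ell}=p$ and $p_{2}p_{1}^{\ell}=q$, and then set $p_{2}^{\ell}\coloneqq r$. That these two assignments are inverse is then immediate from the uniqueness clause of the universal property of $A\times B$.

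The substance of the proof is checking that each construction actually lands in the claimed set, i.e., that the resulting cones have the appropriate universal property. The cleanest way to see this is via representability: for any object $X$, one has natural bijections
\[
  \sC(X,P)\;\cong\;\sC(X,A\times B)\times\sC(X,C)\;\cong\;\sC(X,A)\times\sC(X,B)\times\sC(X,C),
\]
where the first isomorphism comes from the assumed binary product cone $(p_{1}^{\ell},p_{2}^{\ell})$ and the second from the fixed cone $(p_{1},p_{2})$. Tracing a morphism $u\colon X\to P$ through this composite yields precisely $(pu,qu,ru)$, exhibiting $(p,q,r)$ as a triple product cone. The converse is symmetric: composing the triple product isomorphism $\sC(X,P)\cong\sC(X,A)\times\sC(X,B)\times\sC(X,C)$ with the inverse of the $A\times B$ isomorphism in the middle recovers the binary product cone structure with $p_{2}^{\ell}=r$.

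No step here is difficult; the main thing to watch is that one does not circularly invoke the universal property one is trying to prove. In particular, the forward direction should be verified directly, factoring a candidate morphism $X \to P$ out of triples $(f,g,h)$ via $\langle f,g\rangle_{p_{1},p_{2}}\colon X\to A\times B$ paired with $h$ using $(p_{1}^{\ell},p_{2}^{\ell})$, and uniqueness should be traced back through both universal properties. Once this is in hand, the bijection claim follows with no further work.
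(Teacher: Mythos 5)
Your proposal is correct and follows essentially the same route as the paper: the same two constructions (composing with $(p_{1},p_{2})$ one way, factoring $(p,q)$ through $(p_{1},p_{2})$ the other) with mutual inverseness falling out of the uniqueness clause. The only difference is that you fill in, via the representability isomorphisms $\sC(X,P)\cong\sC(X,A\times B)\times\sC(X,C)\cong\sC(X,A)\times\sC(X,B)\times\sC(X,C)$, the verification that the paper leaves to the reader, which is a perfectly clean way to do it.
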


\begin{proof}
  Given a product cone $(p_{1}^{\ell},p_{2}^{\ell})$ as above-left, the reader may readily verify that the cone $(p_{1}p_{1}^{\ell},\ p_{2}p_{1}^{\ell},\ p_{2}^{\ell})$ displays $P$ as a triple product.

  In the other direction, given a triple product cone $(p,q,r)$ let us factor the cone $(p,q)$ over $A\times B$ through the product cone $(p_{1}, p_{2})$ via $p_{1}^{\ell}\colon P \to A\times B$ which is unique among those maps satisfying $p_{1}p_{1}^{\ell}=p$ and $p_{2}p_{1}^{\ell}=q$. It is then straightforward to verify that the cone $(p_{1}^{\ell},r)$ displays $P$ as a product of $A\times B$ and $C$. By construction this assignment is inverse to the previous one.
\end{proof}

A straightforward modification of the proof above yields the dexterous version of this lemma.

\begin{lemma}\label{lemma:paste_dexter}
    Given objects $A,B,C\in\ob\sC$ and a product cone $\prodcone B {p_{1}} {B\times C}{p_{2}} C$, composition with this cone gives a bijection between the following sets.
    \[
      \left\{
      \begin{gathered}
        \textrm{\,triple product cones\,}\\
        \begin{tikzpicture}
          \node(1)[]{$P$};
          \node(2)[below left= of 1]{$A$};
          \node(3)[below= of 1]{$B$};
          \node(4)[below right= of 1]{$C$};
          \draw[a](1)to node[la,swap]{$p$}(2);
          \draw[a](1)to node[la,swap]{$q$}(3);
          \draw[a](1)to node[la]{$r$}(4);
        \end{tikzpicture}
      \end{gathered}
    \right\}\cong
    \left\{
      \begin{gathered}
        \textrm{\,product cones\,}\\
        \begin{tikzpicture}
          \node(1)[]{$P$};
          \node(2)[below left= of 1,anchor=north]{$A$};
          \node(4)[below right= of 1,anchor=north]{$B\times C$};
          \draw[a](1)to node[la,swap]{$p_{1}^{r}$}(2);
          \draw[a](1)to node[la]{$p_{2}^{r}$}(4);
        \end{tikzpicture}
      \end{gathered}
    \right\}
  \]\vspace{-2\baselineskip} 

  \hfill$\blacksquare$
\end{lemma}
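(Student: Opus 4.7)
The plan is to follow the proof of Lemma \ref{lemma:paste_sinister} \emph{mutatis mutandis}, swapping the roles of the left and right factors. Given a product cone $(p_1^r, p_2^r)$ from $P$ with legs to $A$ and $B\times C$, I would construct the candidate triple product cone $(p_1^r,\, p_1 p_2^r,\, p_2 p_2^r)$ by composing $p_2^r$ with the two legs of the fixed product cone for $B\times C$. In the other direction, given a triple product cone $(p, q, r)$, I would use the universal property of $\prodcone{B}{p_1}{B\times C}{p_2}{C}$ to factor the partial cone $(q, r)$ through a unique map $p_2^r\colon P \to B\times C$ satisfying $p_1 p_2^r = q$ and $p_2 p_2^r = r$, and then propose $(p, p_2^r)$ as a binary product cone over $A$ and $B\times C$.

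Two verifications are then required. First, one must show that the constructed cones are of the claimed type: that $(p_1^r, p_1 p_2^r, p_2 p_2^r)$ satisfies the universal property of the triple product whenever $(p_1^r, p_2^r)$ is a binary product cone, and dually that $(p, p_2^r)$ satisfies the universal property of the binary product whenever $(p, q, r)$ is a triple product cone. Both checks reduce to repeated application of the universal property of $B\times C$, and are obtained by literally reversing the role of left and right factorisation in the sinister argument. Second, one must verify that the two assignments are mutually inverse. Starting from $(p, q, r)$, we obtain $p_2^r$ as the unique pairing and recover $(p,\, p_1 p_2^r,\, p_2 p_2^r) = (p, q, r)$ by the defining equations; starting from $(p_1^r, p_2^r)$, we produce the triple cone $(p_1^r, p_1 p_2^r, p_2 p_2^r)$ and then obtain $p_2^r$ back as the unique map factoring $(p_1 p_2^r, p_2 p_2^r)$ through $B\times C$, using uniqueness in the universal property.

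No genuinely new obstacle appears here: the sinister proof already carried out the analogous factorisation on the left, and the sole mild care needed is to keep track of which of the two legs of the triple cone are being bundled into a sub-product (here $q,r$, rather than $p,q$ as in the sinister case). Accordingly, the entire proof is dual to that of Lemma \ref{lemma:paste_sinister} and no step presents a serious difficulty.
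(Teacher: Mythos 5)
Your proposal is correct and is exactly the "straightforward modification" of the proof of \cref{lemma:paste_sinister} that the paper itself invokes (the paper gives no separate argument for the dexterous case): you bundle the legs $(q,r)$ into $B\times C$ rather than $(p,q)$ into $A\times B$, with the same universal-property checks and the same uniqueness argument for mutual inverseness. Nothing further is needed.
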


These lemmas enable us to prove that every cartesian monoidal structure may be constructed to have at least a single, arbitrary component of its associator as strict without issue.

\begin{corollary}\label{cor:consistent}
  In constructing a cartesian monoidal structure on a category \sC, for a single but arbitrary triple of objects $A,B,C\in\ob\sC$ it is always possible to arrange for the associator component $\alpha_{A,B,C}\colon(A\otimes B)\otimes C\to A\otimes(B\otimes C)$ to be the identity.
\end{corollary}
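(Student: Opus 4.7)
The plan is to use \cref{lemma:paste_sinister,lemma:paste_dexter} as a translation device: having fixed a single triple product cone for $A,B,C$, we can use it to induce compatible choices of the two outer product cones (for $(A\otimes B)\otimes C$ and $A\otimes (B\otimes C)$) so that both products are realised on the same object, and so that the defining equations of the associator force $\alpha_{A,B,C}$ to be the identity.

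Concretely, I would begin by selecting arbitrary product cones $A\leftarrow A\otimes B\to B$ and $B\leftarrow B\otimes C\to C$ for the two intermediate binary products, together with any triple product cone $(p,q,r)\colon T\to A,B,C$ of the three objects. Applying \cref{lemma:paste_sinister} to the cone $(p,q,r)$ yields a product cone $(\pi_{1}^{\ell},\pi_{2}^{\ell})\colon T\to (A\otimes B), C$ characterised by $\pi^{A\otimes B}_{1}\pi_{1}^{\ell}=p$, $\pi^{A\otimes B}_{2}\pi_{1}^{\ell}=q$, and $\pi_{2}^{\ell}=r$; I take this to be the chosen product cone exhibiting $(A\otimes B)\otimes C\coloneqq T$. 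Dually, applying \cref{lemma:paste_dexter} to the very same triple cone $(p,q,r)$ yields a product cone $(\pi_{1}^{r},\pi_{2}^{r})\colon T\to A, (B\otimes C)$ satisfying $\pi_{1}^{r}=p$, $\pi^{B\otimes C}_{1}\pi_{2}^{r}=q$, and $\pi^{B\otimes C}_{2}\pi_{2}^{r}=r$; I take this to be the chosen cone exhibiting $A\otimes (B\otimes C)\coloneqq T$. All remaining product cones in the cartesian monoidal structure may be chosen arbitrarily.

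With these choices in place, both the source and target of $\alpha_{A,B,C}$ are literally the same object $T$, and the three defining equations from \eqref{eqn:generic_alpha} reduce to $p\,\alpha_{A,B,C}=p$, $q\,\alpha_{A,B,C}=q$, and $r\,\alpha_{A,B,C}=r$ by substituting the identities above. Since $\id_{T}$ evidently also satisfies these equations and the pairing through the product cone $(p,q,r)$ is unique, we conclude $\alpha_{A,B,C}=\id_{T}$.

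The only potentially delicate point is verifying that the cones produced by \cref{lemma:paste_sinister,lemma:paste_dexter} interact correctly with the defining equations of $\alpha_{A,B,C}$ in \eqref{eqn:generic_alpha}; but this is just a matter of reading off the bijections in those lemmas, so the argument amounts to unfolding definitions. I do not anticipate any real obstacle, as the entire content of the corollary is exactly that \emph{a priori} the four cones in the definition of the associator need not be chosen independently -- once the triple cone is fixed, the two outer binary cones may be taken to arise from it via the two lemmas, and strictness is then automatic.
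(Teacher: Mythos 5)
Your proposal is correct and follows essentially the same route as the paper's own proof: fix the two binary cones and a single triple product cone $(p,q,r)$, use \cref{lemma:paste_sinister,lemma:paste_dexter} in the direction from triple cones to binary cones to manufacture the two outer cones on the same object, and observe that the defining equations of \eqref{eqn:generic_alpha} then collapse to the statement that $\alpha_{A,B,C}$ is the unique factorisation of $(p,q,r)$ through itself. Your explicit unfolding of the three equations is a slightly more hands-on phrasing of the paper's remark that the pasting operation is inverse to the construction of the cones, but the content is identical.
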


\begin{proof}
  We give the cartesian monoidal structure from $A$, $B$, and $C$ ``outward''. First fix product cones $\prodcone{A}{p_{1}}{A\times B}{p_{2}}B$ and $\prodcone{B}{q_{1}}{B\times C}{q_{2}}C$, as well as any triple product $P$ of $A,B,C$ and its cone $(p,q,r)$.

  From these data, let us apply \cref{lemma:paste_sinister,lemma:paste_dexter} to the triple product cone $(p,q,r)$ and the binary cones above to derive new cones for $P$ over the pairs $A\times B,C$ and $A,B\times C$. With this, we define the values of of the tensor $\otimes\colon\sC\times\sC\to\sC$ and matching cones as follows.
  \[
    \begin{aligned}
      A\otimes B&\coloneqq A\times B \textrm{, cone } (p_{1},p_{2})\\
      B\otimes C&\coloneqq B\times C\textrm{, cone } (q_{1},q_{2})
    \end{aligned}\qquad\text{and}\qquad
    \begin{aligned}
      (A\otimes B)\otimes C&\coloneqq P\textrm{, cone } (p^{\ell}_{1},p^{\ell}_{2})\\
      A\otimes(B\otimes C)&\coloneqq P\textrm{, cone } (p^{r}_{1},p^{r}_{2})
    \end{aligned}
  \]
  In the usual construction \eqref{eqn:generic_alpha} of the associator component $\alpha_{A,B,C}\colon P\to P$ we paste the cones displayed on the left with their matching partners displayed on the right to derive triple product cones. But, by \cref{lemma:paste_sinister,lemma:paste_dexter}, this operation is inverse to our construction of the cones $(p_{1}^{\ell},p_{2}^{\ell})$ and $(p_{1}^{r},p_{2}^{r})$ and so in both cases we find the same cone, $(p,q,r)$. Thus, by construction, $\alpha_{A,B,C}=\id_{P}$. From here we may proceed with the usual construction of the cartesian monoidal structure.
\end{proof}

In the special case of Isbell's hypotheses, that is, in the presence of an object $C=C\times C$ with given cone $(p_{1},p_{2})$ we may apply the above corollary to see that $\alpha_{C,C,C}=\id_{C}$ is consistent \emph{under a certain choice of cones}. Such a particular choice is displayed in \eqref{eqn:unbiased_isbell}, provided that we derive the cones $(p^{\ell}_{1},p^{\ell}_{2})$ and $(p^{r}_{1},p^{r}_{2})$ as in the above corollary: beginning from a fixed triple product cone $C=C\times C\times C$ -- for example, $(p_{1}p_{1},p_{2}p_{1},p_{2})$.

\section{Conclusion}\label{sec:conclusion}

Isbell's argument quoted above and reproduced widely is often summarised as the assertion that strictification of the components of the associator is not possible due to implications of naturality. However, as we have seen in \cref{sec:differentcones}, this is not the case: \cref{cor:consistent} demonstrates that we can \emph{always} arrange for a cartesian monoidal structure to have a strict associator at a single but arbitrary triple of objects. As such, even for objects $C=C\times C$, there cannot be a general theorem giving an obstruction to the strictification of the component of $\alpha$ at $C$ in an arbitrary cartesian monoidal category. Isbell's lesson in this, properly interpreted in the vein of our \cref{lemma:subterm}, is rather that choosing all four cones involved in $\alpha_{C,C,C}$ to be the same and taking $\alpha_{C,C,C}=\id_{C}$ requires $\sC(-,C)$ to be subterminal.

\bibliographystyle{alpha}
\bibliography{refs}

\end{document}